\theoremstyle{plain}
\newtheorem{theorem}{Theorem}[section]
\newtheorem*{Theorem B}{Theorem B}
\newtheorem*{Theorem A}{Theorem A}
\newtheorem{lemma}{Lemma}[section]
\newtheorem{proposition}{Proposition}[section]
\newtheorem{definition}{Definition}[section]
\newtheorem{example}{Example}[section]
\numberwithin{equation}{section}
\theoremstyle{remark}
\newtheorem{remark}{Remark}[section]
 \numberwithin{equation}{section}
\def\<{\left < }
\def\>{\right >}
\def\({\left ( }
\def\){\right )}
\def\e{\eqref}
\begin{document}

\title[Totally geodesic submanifolds of symmetric spaces, III]{Totally geodesic submanifolds of symmetric spaces, III}

\author[B.-Y. Chen, P.-F. Leung and T. Nagano]{Bang-Yen Chen, Pui-Fai Leung and Tadashi Nagano}

\begin{abstract} One purpose of this article is to establish a general method to determine stability of totally geodesic  submanifolds of symmetric spaces. The method is used to determine the stability of the basic totally geodesic submanifolds $M_+,M_-$ introduced and studied by Chen and  Nagano in [Totally geodesic submanifolds of symmetric spaces, II, {Duke Math. J.}  45 (1978), 405--425] as minimal submanifolds. The other purpose  is to establish a stability theorem for minimal totally real submanifolds of K\"ahlerian manifolds.

\end{abstract}

 \subjclass[2000]{53C35, 53C42, 53C55, 53D12, 58J50}

\maketitle

\section{Introduction}

One purpose of this paper is to establish a general method to determine stability of totally geodesic  submanifolds of symmetric spaces. The method will be used to determine the stability of the basic totally geodesic submanifolds $M_+,M_-$ introduced and studied in part II of this series \cite{CN2} as minimal submanifolds. The other purpose of this paper is to establish a stability theorem for minimal totally real submanifolds of K\"ahlerian manifolds.

The problem of  stability amount to finding a good estimate of the smaller eigenvalues of a certain elliptic linear differential operator $L$, which is formi\-dably  difficult in general. However, in the presence of an abundant group action leaving $L$ invariant, $L$ can be related to a certain Casimir operator and the problem can be resolved by means of the representation theory. This idea will be explained in section 2 to establish the main theorem (Theorem 2.1), which some of its applications are derived in section 3, We determine the  stability of $M_+$ and $M_-$ in section 4.  Stability of minimal totally real submanifolds are studied in section 5.

As to earlier work, we mention Fomenko (\cite{Fo} and other papers) who determined compacta or certain currents which realize the absolute minimum of the volume within their cobordism or homology classes and the complete determination of stable minimal submanifolds of the complex projective space by Lawson-Simons \cite{LS}.

\section{Stability of minimal submanifolds}

Let $f:N \rightarrow M$ be an immersion from a compact
$n$-dimensional manifold $N$ into an $m$-dimensional
Riemannian manifold $M$. Let $\{f_{t}\}$ be a 1-parameter
family of immersions of $N \rightarrow M$ with the
property that $f_{0}=f$. Assume the map $F: N\times
[0,1] \rightarrow M$ defined by $F(p,t)=f_{t}(p)$ is
differentiable. Then $\{f_{t}\}$ is called a {\it
variation\/} of $f$. A variation of
$f$ induces a vector field in $M$ defined along
the image of $N$ under $f$. We shall denote this field by
$\zeta$ and it is constructed as follows:

Let $\partial/\partial t$ be the standard vector field in $N\times [0,1]$. We set
$$\zeta(p)=F_{*}\!\(\text{$\small\frac{\partial}{\partial t}$}(p,0)\!\).$$
Then $\zeta$ gives rise to cross-sections $\zeta^T$ and $\zeta^N$ in $TN$ and $T^{\perp}N$, respectively. If we have $\zeta^{T}=0$, then $\{f_{t}\}$ is called a {\it normal variation\/} of $f$. For a given normal vector field $\xi$ on $N$, $exp\,t\xi$ defines a normal variation $\{f_{t}\}$ induced from $\xi$. We denote by ${\mathcal V}(t)$ the volume of $N$ under $f_t$ with respect to the induced metric and by ${\mathcal V}'(\xi)$ and ${\mathcal V}''(\xi)$, respectively, the values of the first and the second derivatives of ${\mathcal V}(t)$ with respect to $t$, evaluated at $t=0$. 

The following formula is well-known:
$${\mathcal V}'(\xi)=-n\int_{N} \<\xi,H\> *1.$$ 

A  compact minimal submanifold $N$ in a Riemannian manifold $M$ is called {\it stable} if the second variation of the the volume integral is nonnegative for every normal vector field $v$. The second variation  formula is well-known:
\begin{align}\label{2.1} {\mathcal
V}''(\xi)=\int_N \{||\nabla v||^2 -R(v,v)-||Bv||^2\}*1,\end{align}
where $\nabla$ is the connection of the normal bundle $T^\perp N$, $R(v,v)$ is defined to be $\sum_i\<R^M(e_i,v)v,e_i\>$ at a point $x$ for any orthonormal basis $(e_i)$ for the tangent space $T_xN$ to $N$ at $x$ with the curvature tensor $R^M$ of $M$, and $B$ is the second fundamental form of $N$ in $M$; thus $Bv$ is a $T^\perp N$-valued 1-form. 

Applying the Stokes theorem to the integral of the first terms (as Simons \cite{S} did), we have
\begin{align}\label{2.2}{\mathcal V}''(\xi)= \int_N \<Lv,v\>*1,\end{align}
 in which $L$ is a self-adjoint,  strongly elliptic linear differential operator of the second order acting on the space of sections of the normal bundle given by
$$L=-\Delta^{D}-{\hat A}-Q,$$
where $\Delta^D$ is the Laplacian operator associated with the normal connection, $\<\right.\! {\hat  A}\xi,\eta\!\left.\>=trace\<\right.\! A_{\xi},A_{\eta}\!\left.\>$, and $\<\right.\! Q\xi,\eta\!\left.\>=R(\xi,\eta).$

The differential operator $L$ is called the {\it Jacobi operator\/} of $N$ in $M$. The differential operator $L$ has discrete eigenvalues $\lambda_{1} < \lambda_{2} <\ldots
\,\nearrow \infty.$ We put $$E_{\lambda}=\{\xi\in \Gamma(T^{\perp}N)\, :\, L(\xi)=\lambda\xi \,\}.$$ The number of $\sum_{\lambda <0}dim(E_{\lambda})$ is called
the {\it index\/} of $N$ in $M$. A vector field $\xi$ in $E_0$ is called a {\it Jacobi field.\/} 
Thus, $N$ is stable if and only if the eigenvalues of $L$ are all nonnegative. It is obvious but important that \e{2.1} (= \e{2.2}) is zero if $v$ is the restriction of a Killing vector field on $M$. We will show below the existence of such a $v$ when $N$ is a totally geodesic submanifold of a symmetric space $M$, which we assume from now on unless mentioned otherwise. Now that $B=0$, stability obtains trivially when $R$ is nonnegative. For this reason we are interested in the case $M$ is compact. Also, we assume that $M$ is irreducible partly to preclude tori as $M$, although this assumption is not trivial (see Remark \ref{R:2.1}).

We need to fix some notations. Since $N$ is totally geodesic, there is a finitely covering group $G_N$ of the connected isometry group $G_{N}^o$ of $N$ such that $G_N$ is a subgroup of the connected isometry group $G_M$ of $M$ and leaves $N$ invariant, provided that $G_{N}^o$ is semi-simple. Let $\mathcal P$ denote the orthogonal complement of the Lie algebra ${\it g}_{_{N}}$ in the Lie algebra ${\it g}_{_{M}}$ with respect to the bi-invariant inner product on ${\it g}_{_{M}}$ which is compatible with the metric of $M$. Every member of ${\it g}_{_{M}}$  is though of as a Killing vector field because of the action $\, G_M$ on $\, M$. 

Let $\hat P$ denote the space of the vector fields corresponding to the member of $\mathcal P$
restricted to the submanifold $N$.

\vskip.1in
\begin{lemma}\label{L:2.1}  To every member of $\, \mathcal P$ there corresponds a unique (but not canonical) vector field $v \in {\hat P}, v$ is a normal vector field and hence ${\hat P}$ is a $G_N$-invariant subspace of the space $\Gamma (T^{\perp}N)$ of  sections of the normal bundle to $N$. Moreover, ${\hat P}$ is homomorphic with $\mathcal P$ as a $G_N$-module.
\end{lemma}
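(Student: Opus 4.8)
The plan is to realize each element of $\mathcal P$ as a Killing field on $M$, to evaluate it first at a well-chosen base point, and then to spread the conclusion over all of $N$ by the transitive action of $G_N$. First I would fix a point $o\in N$ and use it as base point for both $M=G_M/K$ and $N=G_N/K_N$, so that $N$ passes through $o$. Let $\sigma$ denote the involutive automorphism of $\mathfrak g_M$ determined by the geodesic symmetry of $M$ at $o$, with Cartan decomposition $\mathfrak g_M=\mathfrak k\oplus\mathfrak m$ and $T_oM$ identified with $\mathfrak m$ in the usual way; for the compatible metric the bi-invariant inner product is $\sigma$-invariant (it is a multiple of the Killing form, $M$ being compact and irreducible), so $\mathfrak k\perp\mathfrak m$. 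Since $N$ is totally geodesic its Lie triple system is $\mathfrak m_N=\mathfrak g_N\cap\mathfrak m=T_oN$, and $\mathfrak g_N=\mathfrak k_N\oplus\mathfrak m_N$ is $\sigma$-invariant.

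The key computation is at $o$. I would first note that $\mathcal P$ is $\sigma$-invariant, being the orthogonal complement of the $\sigma$-invariant subspace $\mathfrak g_N$ with respect to a $\sigma$-invariant inner product; hence $\mathcal P=(\mathcal P\cap\mathfrak k)\oplus(\mathcal P\cap\mathfrak m)$ with $\mathcal P\cap\mathfrak m=\mathfrak m\ominus\mathfrak m_N=T_o^\perp N$. For $X=X_{\mathfrak k}+X_{\mathfrak m}\in\mathcal P$ the value $X^*(o)$ of the associated Killing field equals $X_{\mathfrak m}$: the one-parameter group $\exp(tX_{\mathfrak k})$ lies in the isotropy group $K$ and fixes $o$, so $X_{\mathfrak k}$ contributes nothing, while $X_{\mathfrak m}\in\mathfrak m$ contributes $X_{\mathfrak m}$ under the identification $T_oM\cong\mathfrak m$. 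Since $X_{\mathfrak m}\in\mathcal P\cap\mathfrak m=T_o^\perp N$, the field $X^*$ is normal to $N$ at $o$.

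Next I would propagate normality to every $p\in N$ by homogeneity. As $N$ is a symmetric space and $G_N$ covers its isometry group, $G_N$ acts transitively on $N$; choose $g\in G_N$ with $g\cdot o=p$. The equivariance of the Killing-field correspondence, $g_*X^*=(\mathrm{Ad}(g)X)^*$, gives $X^*(p)=dg_o\left(\left(\mathrm{Ad}(g^{-1})X\right)^{*}(o)\right)$. Because the inner product is bi-invariant, $\mathcal P=\mathfrak g_N^{\perp}$ is $\mathrm{Ad}(G_N)$-invariant, so $\mathrm{Ad}(g^{-1})X\in\mathcal P$ and the previous step yields $\left(\mathrm{Ad}(g^{-1})X\right)^{*}(o)\in T_o^\perp N$; since $g\in G_N$ preserves $N$, $dg_o$ maps $T_o^\perp N$ onto $T_p^\perp N$, whence $X^*(p)\in T_p^\perp N$. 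This shows that $v:=X^*|_N$ is a normal field and that $X\mapsto v$ is a well-defined, single-valued linear map $\Phi\colon\mathcal P\to\Gamma(T^\perp N)$ with image $\hat P$.

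Finally, the same equivariance gives the module assertion: for $g\in G_N$ one has $g\cdot(X^*|_N)=(\mathrm{Ad}(g)X)^*|_N=\Phi(\mathrm{Ad}(g)X)$, so $\hat P$ is a $G_N$-invariant subspace of $\Gamma(T^\perp N)$ and $\Phi$ intertwines the adjoint action on $\mathcal P$ with the natural $G_N$-action on $\hat P$; thus $\hat P$ is a homomorphic image of $\mathcal P$ as a $G_N$-module. I would refrain from claiming $\Phi$ injective: an $X\in\mathcal P\cap\mathfrak k$ centralizing $\mathfrak m_N$ produces the zero field on $N$ yet may be nonzero, which is precisely why the correspondence is unique but not canonical. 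The main obstacle throughout is reconciling the algebraic orthogonality on $\mathfrak g_M$ with the Riemannian orthogonality in $T_oM$ while controlling the $\mathfrak k$-components, which vanish at $o$ but not at other points of $N$; this is exactly what the transitivity-plus-$\mathrm{Ad}$-equivariance reduction is designed to handle.
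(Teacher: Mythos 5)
Your proof is correct and takes essentially the same route as the paper's: the paper simply lets $o \in N$ be arbitrary and observes that evaluation of Killing vector fields at $o$ identifies ${\it g}_{_{M}}/{\it k}_{_{M}}$, ${\it g}_{_{N}}/{\it k}_{_{N}}$ and ${\mathcal P}/({\mathcal P}\cap {\it k}_{_{M}})$ with $T_oM$, $T_oN$ and $T_o^{\perp}N$, so that $v(o)$ is normal at every point of $N$. Your Cartan-decomposition computation at a fixed base point, together with the $\mathrm{Ad}$-equivariance and transitivity argument, just makes explicit the details (orthogonality of ${\it k}$ and ${\it m}$, $\sigma$-invariance of $\mathcal P$, and the passage from one point to all of $N$) that the paper leaves implicit.
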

\begin{proof} Let $o$ be an arbitrary point of $N$. Let $K_M$ and $K_N$ denote the isotropy subgroups of $G_M$ and $G_N$ at $o$, respectively. Then  ${\it g}_{_{M}}/
{\it k}_{_{M}}$ and ${\it g}_{_{N}}/{\it k}_{_{N}}$ and ${\mathcal P}/({\mathcal P} \cap {\it
k}_{_{M}})$ are identified with $T_{o}M$, $T_{o}N$ and  $T_{o}^{\perp}N$ by isomorphisms induced by the evaluation of vector fields in ${\it g}_{_{M}}$ at $o$. In particular, the value $v(o)$ of $v$ is normal to $N$.
This proves the lemma. \end{proof}

Now we are ready to explain our method for determining stability. The group $G_N$ acts on sections in
$\Gamma(T^{\perp}N)$ and hence on the differential operators: $\Gamma(T^{\perp}N) \rightarrow
\Gamma(T^{\perp}N)$. $G_N$ leaves $L$ fixed since $L$ is defined with $N$ and the metric of $M$ only. Therefore, each eigenspace of $L$ is left invariant by $G_N$. Let $V$ be one of its $G_N$-invariant irreducible subspaces. We have  a representation $\rho : G_{N} \rightarrow GL(V).$ We denote by $c(V)$ or $c(\rho)$ the eigenvalue of the corresponding  Casimir operator, which we will explain shortly. Then Theorem \ref{T:2.1} say, modulo details, that $N$ is stable if and only if $c(V)\geq c(\mathcal P)$ for every such $V$.

 To define $c(V)$ we fix an orthonormal basis $(e_{\lambda})$ for ${\it g}_{_{N}}$ and consider the linear endomorphism $C$ or $C_V$ of $V$ defined by 
\begin{align}\label{2.3}C=-\sum\,\rho(e_{\lambda})^{2}.\end{align}
It is known that $C$ is $c(V)I_V$ (see Chapter 8 of \cite{B}), where $I_V$ is the identity map on $V$. In our case, the {Casimir operator} $$C_{V}=-\sum [e_{\lambda},[e_{\lambda},V]\,]$$ for every member $v$ of $V$ (after extending to a neighborhood of $N$).

Accepting the theorem for the moment, we have an {\it algorithm for stability} goes like this. One can compute $c(V)$ by the Freudenthal formula (cf. \cite[Chapter 8, page 120]{B}) once one knows the action $\rho$ of $G_N$ on $V$. So, the rest is to know all the simple $G_N$-modules $V$ in $\Gamma(T^\perp N)$. This is done by means of the Frobenius theorem as reformulated by R. Bott, which asserts in our case, that a simple $G_N$-module $V$ appears in $\Gamma(T^\perp N)$ if and only if $V$ as a $K_N$-module contains a simple $K_N$-module which is isometric with a $K_N$-module of $T^\perp_o N$.

\begin{theorem}\label{T:2.1}  A compact, connected, totally geodesic submanifold $N= G_{N}/K_{N}$ of a locally symmetric space $M=G_{M}/K_{M}$ is stable as a minimal
submanifold if and only if one has $c(V)\geq c(P')$ for the eigenvalue of the Casimir operator of every simple $G_N$-module $V$ which shares as a $K_N$-module some simple $K_N$-submodule of the $K_N$-module $T_{o}^{\perp}N$ in common with some simple $G_N$-submodule $P'$ of $P$. 
\end{theorem}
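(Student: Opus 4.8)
The plan is to diagonalize the Jacobi operator $L$ by means of the $G_N$-action and to read off its eigenvalues from Casimir eigenvalues. Since $N$ is totally geodesic its second fundamental form $B$ vanishes, so $\hat A=0$ and $L=-\Delta^D-Q$. As already observed, $L$ commutes with the action of $G_N$ on $\Gamma(T^\perp N)$, so each eigenspace is a $G_N$-module and decomposes into simple $G_N$-modules; by the Frobenius--Bott theorem the simple modules $V$ that can occur are exactly those sharing, as $K_N$-modules, a simple $K_N$-submodule $\tau$ of $T_o^\perp N$. By Schur's lemma $L$ is a scalar on the copy of each such $V$ carrying a fixed type $\tau$; writing $\ell(V;\tau)$ for that scalar, the spectrum of $L$ is the collection of these numbers, and $N$ is stable iff every $\ell(V;\tau)\ge 0$. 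So the whole problem is to compute $\ell(V;\tau)$.

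First I would establish the key identity relating $L$ to the Casimir operator $C_V$ of \e{2.3}. Splitting an orthonormal basis of $\mathfrak g_N$ into bases of $\mathfrak k_N$ and $\mathfrak m_N\cong T_oN$, the Casimir separates as $C_V=-\sum_a[e_a,[e_a,\cdot]]-\sum_i[f_i,[f_i,\cdot]]$, the $\mathfrak m_N$-sum yielding the Laplacian $-\Delta^D$ of the normal connection along $N$ and the $\mathfrak k_N$-sum yielding the isotropy Casimir $C_{K_N}$ acting on the fibre. Using the symmetric-space curvature identity $R^M(X,Y)=-\operatorname{ad}[X,Y]$ on $\mathfrak m_M$, the curvature term $Q$ in $L$ is rewritten through the same brackets, and the isotropy contributions combine. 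The target is the formula $\ell(V;\tau)=c(V)-\kappa(\tau)$, in which the correction $\kappa(\tau)$ depends only on the $K_N$-type $\tau\subset T_o^\perp N$ (through $C_{K_N}$ and the curvature of $M$) and not on $V$.

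It remains to identify $\kappa(\tau)$, and here the space $\hat P$ of Lemma \ref{L:2.1} is decisive. The flow of a Killing field of $M$ consists of isometries and therefore moves the totally geodesic (hence minimal) $N$ through a family of minimal submanifolds of constant volume, so the induced normal field is a Jacobi field; thus $L\equiv 0$ on $\hat P\cong\mathcal P$. Decomposing $\mathcal P$ into simple $G_N$-modules $P'$ and using $\mathcal P/(\mathcal P\cap\mathfrak k_M)\cong T_o^\perp N$ to locate a $P'$ realizing the given type $\tau$, the identity of the previous paragraph forces $0=\ell(P';\tau)=c(P')-\kappa(\tau)$, so $\kappa(\tau)=c(P')$. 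Substituting back gives $\ell(V;\tau)=c(V)-c(P')$, and stability of $N$ becomes exactly $c(V)\ge c(P')$ for every pair $(V,P')$ sharing a $K_N$-type, which is the assertion.

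I expect the main obstacle to be the identity of the second paragraph: the Weitzenb\"ock-type bookkeeping that turns $-\Delta^D-Q$ into $C_V$ minus a pure $K_N$-type constant. The delicate points are fixing the normalization and sign of the curvature term $Q$ so that it matches the bracket form of the Casimir, and checking that the residual operator really is the scalar $\kappa(\tau)$ on each $K_N$-isotypic piece (independent of multiplicity), so that the calibration against $\hat P$ is legitimate. Compactness of $M$ enters only to guarantee finite-dimensional representations, a discrete spectrum, and positive Casimir eigenvalues.
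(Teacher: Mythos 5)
Your proposal follows essentially the same route as the paper's proof: both show that $L$ differs from the Casimir operator $C$ by a zeroth-order, $G_N$-invariant, self-adjoint fibre endomorphism (the paper by a direct computation in an adapted local frame, you by the homogeneous-bundle Casimir splitting), reduce that correction to a $K_N$-type constant via Schur's lemma, calibrate the constant to $c(P')$ using the normal Killing fields of Lemma \ref{L:2.1}, and conclude with the Bott--Frobenius theorem. The multiplicity caveat you flag at the end is precisely the point the paper handles by taking the simple $K_N$-submodules of $T_o^{\perp}N$ inside eigenspaces of the symmetric operator $\bar S$, so your outline matches the paper's argument in both structure and substance.
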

\begin{proof} We first prove the next (a) through (c) by using local expressions of $L$ and $C$.
\vskip.05in

{(a)} {\it The difference $\, L-C \,$ is an operator of order zero.}
\vskip.05in

 {(b)} {\it The difference $\, L-C \,$ is given by a self-adjoint endomorphism $Q$ of the normal bundle $T^{\perp}N$. }
\vskip.05in

 {(c)} {\it The endomorphism $Q$ is $G_N$-invariant.}
\vskip.1in 

Given a point $x$ of $N$, we choose a basis of ${\it g}_{_{N}}$ given by 
$$(e_{\lambda})=(\ldots,e_{i},\ldots,e_{\alpha},\ldots) $$ 
and a finite system $(e_{r})$ of vectors in ${\mathcal P} \subset {\it g}_{_{M}}$ such that 

(1) $(e_{i}(p))_{1\leq i \leq n}$ is an orthonormal basis for the tangent space $T_{p}N$, 

(2) ${\nabla e_{i}}=0,\, 1\leq i \leq n,$ at $x$, 

(3) $e_{\alpha}(x)=0,\, n < \alpha \leq dim\,{\it g}_{_{N}}$, and 

(4) $(e_{r}(x))$ is an orthonormal basis for the normal space $T_{x}^{\perp}N$, 

\noindent which we can do as is well-known.

We write $\nabla$ for the connection $\nabla^M$ of $M$, since we know $\nabla^M=\nabla$ on the tangent and the normal vector fields to $N$. 

An arbitrary normal vector field $v$ is written as $v=\sum v^{r}e_r$ on a neighborhood of $x$ by Lemma \ref{L:2.1}. Evaluating $L\xi$ and $C\xi$ at $x$, we obtain 
$$Lv=-\sum\,{\nabla}_{e_{i}}{ \nabla}_{e_{i}}v -\sum\,R_{rs}v^{r}e_{s},$$ 
where $R_{rs}$ are the components of $R$ in \e{2.1} and 
$$Cv= -\sum \,[e_{\lambda},[e_{\lambda},v]\,]$$
$$=-\sum\,{ \nabla}_{e_{i}}{\nabla}_{e_{i}}v+\sum\, v^{r}{ \nabla}_{e_{i}}{ \nabla}_{e_{r}}e_{i} -\sum \,v^{r}({ \nabla}_{e_{r}}e_{\alpha})^{s}{ \nabla}_{e_{s}} e_{\alpha}$$
$$=-\sum \,{ \nabla}_{e_{i}}{\nabla}_{e_{i}}v  -\sum\,v^{r}({ \nabla}_{e_{r}}e_{\alpha})^{s} { \nabla}_{e_{s}}e_{\alpha},$$ 
where for the vanishing of the second term we use the fact that $e_i$ is a Killing vector
field. Thus we find 
$$(L-C)v=\sum\, v^{r}({ \nabla}_{e_{r}}e_{\alpha})^{s}\,{ \nabla}_{e_{s}}
e_{\alpha} -\sum\, R_{rs}v^{r}e_{s}$$ 
$$=\sum\, (A_{\alpha})^{2}v-R(v, *),$$ where   $A_{\alpha}$ is the Weingarten map given by the restriction of the operator: $X \rightarrow-{ \nabla}_{X}e_{\alpha}$ on $T_{x}N$ to the normal space $T_{x}^{\perp}N$. This proves  statement (a). Since $A_\alpha$ is skew-symmetric, $(A_\alpha)^2$ is symmetric. Statement (c) is obvious from the $G_N$-invariance of $L$ and $C$.
 
The theorem follows from statements (a), (b), (c) easily when the isotropy subgroup $K_N$ is irreducible on the normal space $T_{x}^{\perp}N \cong {\it g}_{_{M}}/({\it g}_{_{N}} \oplus {\it k}_{_{M}})$. In fact, $Q$ is then a constant scalar multiple of the identity map of $T^{\perp}N$; $Q=k\cdot I,$ by statements (a) through (c) and Schur's lemma. $N$ is stable if and only if the eigenvalues of $L$ are all non-negative. Since $L=(c(P')+k)\cdot I=0$ on the normal Killing fields (see Lemma \ref{L:2.1}), this is equivalent to say that $0 \leq c(V)+k=c(V)-c(P')$ for every simple $G_N$-module $V$ in $\Gamma (T^{\perp}N)$ (which is necessarily contained in
an eigenspace of $L$ by $Q  = k\cdot I),$ and the Bott-Frobenius theorem completes the proof.

In the general case, we decompose the normal space into the direct sum of simple $K_N$-modules: $\mathcal P'\oplus \mathcal  P''\oplus \cdots \,.$ Accordingly, we have $T^{\perp}N=E'\oplus E'' \oplus \ldots,$ where $E', E'',\ldots,$ etc. are obtained from $P', P'',\ldots,$ etc., in the usual way by applying the action of $G_N$ to the vectors in $\mathcal P', \mathcal P'',\ldots,$ etc. Since $G_N$ leaves invariant $E', E'',\ldots,$ the normal connection leaves invariant the section spaces $\Gamma (E'), \Gamma (E''),\ldots \, .$ Hence, $L$ and $C$ leave these spaces invariant. (For this, the irreducible subspaces $\mathcal P',\mathcal  P'',
\ldots$ must be taken within eigenspaces of the symmetric operator ${\bar S}$ at the point o). In particular, the projections of $\Gamma (T^{\perp}N)$ onto $E', E'',\ldots,$ etc. commute with $C$ and $L$. Thus one can repeat the argument for irreducible case to each of $E',E'',\ldots $ to finish the proof of the theorem.
\end{proof}

\begin{remark}\label{R:2.1} In the sequel we will assume that the ambient symmetric space $M$ is irreducible. But the reducible case is far from trivial. Indeed, for instance, the diagonal in the Riemannian product $M\times M$ is stable as a minimal submanifold if and only if the identity map of $M$ is stable as a harmonic map and this condition obtains for some $M$ (e.g. $S^2$) but not for the others (e.g. $S^n$ for $n>2$) (cf. \cite{N} for this and determination of stability for individual symmetric spaces).
\end{remark}

\section{Consequences}

The following proposition, faintly reminiscent of Synge's lemma, is a very simple application of Theorem \ref{T:2.1} and yet most useful the the next section.

\begin{proposition}\label{P:3.1} {\it A compact, connected, totally geodesic submanifold $N$ of a compact symmetric space $M$ is unstable as a minimal submanifold if the normal bundle admits a nonzero $G_N$-invariant section and if the centralizer of $G_N$ in $G_M$ is discrete.}
\end{proposition}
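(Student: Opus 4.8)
The plan is to use the given $G_N$-invariant normal section as a test field on which the Jacobi quadratic form is strictly negative, so that $L$ must have a negative eigenvalue. Fix a base point $o$ and write $\eta = s(o) \in T_o^\perp N$ for the value at $o$ of the nonzero invariant section $s$. Since $s$ is $G_N$-invariant and $G_N$ acts transitively by isometries, $s$ is nowhere zero, so $\eta \neq 0$, and $\eta$ is fixed by the isotropy group $K_N$.

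First I would observe that the Casimir operator annihilates $s$. Writing $C = -\sum \rho(e_\lambda)^2$ for the infinitesimal $G_N$-action on sections, $G_N$-invariance of $s$ gives $\rho(e_\lambda)s = 0$ for every $\lambda$, whence $Cs = 0$. By statements (a)--(c) in the proof of Theorem \ref{T:2.1}, the difference $L - C$ is the zero-order, self-adjoint, $G_N$-invariant bundle endomorphism $Q$ with $Qv = \sum (A_\alpha)^2 v - R(v,\cdot)$. Hence $Ls = Qs$. Because $\eta$ is $K_N$-fixed we have $A_\alpha \eta = 0$ for the isotropy generators $e_\alpha$, so $(A_\alpha)^2\eta = 0$ and the Weingarten terms drop out, leaving $\langle Ls, s\rangle(o) = \langle Qs, s\rangle(o) = -R(\eta,\eta)$. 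Since $Q$ and $s$ are both $G_N$-invariant, the function $\langle Ls, s\rangle$ is $G_N$-invariant, hence constant on $N = G_N/K_N$, equal to $-R(\eta,\eta)$.

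Next I would evaluate $R(\eta,\eta)$ through the symmetric-space curvature $R^M(X,Y)Z = -[[X,Y],Z]$. Taking an orthonormal basis $(e_i)$ of $T_oN \cong p_N$ and using invariance of the inner product, one gets $R(\eta,\eta) = \sum_i \langle R^M(e_i,\eta)\eta, e_i\rangle = \sum_i \|[e_i,\eta]\|^2 \geq 0$, since $M$ is of compact type. Consequently $\int_N \langle Ls,s\rangle *1 = -R(\eta,\eta)\,\mathrm{Vol}(N) \leq 0$, which already shows $N$ is not strictly stable. Upgrading this to a strict inequality is the real content.

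The hard part is excluding $R(\eta,\eta) = 0$, and this is exactly where the centralizer hypothesis enters. If $R(\eta,\eta) = 0$, then every $[e_i,\eta]$ vanishes, i.e. $[p_N,\eta] = 0$; combined with $[k_N,\eta] = 0$, which is the infinitesimal form of the $K_N$-invariance of $\eta$, this yields $[g_N,\eta] = 0$. Thus the nonzero vector $\eta$ would lie in the centralizer of $g_N$ in $g_M$ and generate a one-parameter subgroup centralizing $G_N$, contradicting the assumption that this centralizer is discrete. Hence $R(\eta,\eta) > 0$, so $\int_N \langle Ls,s\rangle *1 < 0$, $L$ has a negative eigenvalue, and $N$ is unstable.
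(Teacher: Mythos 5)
Your proof is correct and takes essentially the same approach as the paper: both arguments reduce the second variation on the invariant section to $-\int_N R(v,v)\,{*}1$ and then use the symmetric-space sectional curvature formula $\|[e,f]\|^{2}$ together with the discreteness of the centralizer to exclude $R(v,v)=0$ (via $[e_i,\eta]=0$ for the tangential part and $[e',\eta]=0$ for the isotropy part, forcing $\eta$ to centralize $g_N$), exactly as in your last paragraph. The only difference is bookkeeping: the paper obtains the reduction immediately from $\nabla v=0$ (an invariant section is parallel, since transvections realize parallel transport) and $B=0$ in \eqref{2.1}, whereas you route it through the Casimir identity $L=C+Q$ from the proof of Theorem \ref{T:2.1}; the two computations are equivalent, and the decisive centralizer step is identical.
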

\begin{proof} Let $v$ be a nonzero $G_N$-invariant normal vector field on $N$. We have $\nabla v=0$. In view of \e{2.1} we will show that $R(v,v)$ is positive. 

The sectional curvature of a tangential 2-plane at a point $x\in N$ equals
$||[e,f]||^{2}$ if (i) $e$ is a member of ${\it g}_{_{N}}$, (ii) $f$ is that of ${\it g}_{_{M}}$, (iii)
$e(x)$ and $f(x)$ form an orthonormal basis for the 2-plane, and (iv) ${ \nabla}e={\nabla}f=0$ at $x$. Therefore, $R(v,v)$ fails to be positive only if $[e,f]=0$ for every such $e$ and $f$ satisfying $v(x)\wedge f(x)=0.$ Since the isotropy subgroup $K_N$ at $x$ leaves the normal vector $v(x)$ invariant, we have $[e',f]=0$ for every member $e'$ of ${\it k}_{_{M}}$ and hence $[{\it g}_{_{N}},f]=0$ if $R(v,v)=0$ at $x$. Such an $f$ generates a subgroup in the centralizer of ${\it g}_{_{N}}$ in ${\it g}_{_{M}}$. This contradicts to the assumption. 
\end{proof}

\vskip.1in

\begin{example}\label{E:3.1}
 {\rm Let $N$ be the equator in the sphere $M=S^n$. That $N$ is unstable follows from the proposition  if one considers a unit  normal vector field to it. The centralizer in this case is generated by the antipodal map: $x \rightarrow -x.$ Its orbit space is the real projective space $M'$. The projection: $M \rightarrow M'$ carries $N$ onto a hypersurface $N'$. The reflection in $N'$ is a member of $G_N$ by our general agreement on $G_N$ (if $n>1$) and precludes the existence of non-vanishing $G_N$-invariant normal vector field to $N'$. It is clear by Theorem \ref{T:2.1} that $N'$ is stable.}
 \end{example}

\vskip.1in
\begin{remark} In general, if $N$ is a stable minimal submanifold of a Riemannian manifold $M$ and $M$ is a covering Riemannian manifold of $M'$, then the projection $N'$ of $N$ in $M$ is stable too. The example above shows the converse is false.
\end{remark}

\vskip.1in
\begin{definition}\label{D:3.1} {\rm For a compact connected symmetric space $M=G_M/K_M$, $G_M$ is semisimple,  there is a unique symmetric space $M^*$ of which $M$ and every connected symmetric space which is locally isomorphic with $M$ are covering Riemannian manifold of $M^*$. We call $M^*$ the {\it bottom space} of $M$. If $M$ is a group manifold, $M^*$ is the adjoint group $ad(M)$.
}\end{definition}

By applying Theorem 5.2 and Proposition 5.3 above, we may
obtain the following result.

\begin{proposition}\label{P:3.2} A compact subgroup $N$ of a compact Lie group $M$ is stable with respect to a bi-invariant metric on $M$ if 

{\rm (a)} $N$  has the same rank as $M$ and

{\rm (b)}  $M=M^{*}$, that is, $M$ has no nontrivial center. 
\end{proposition}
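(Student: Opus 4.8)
The plan is to realize $M=G$ as the symmetric space $(G\times G)/\Delta G$ in the usual way and reduce the stability question to the Casimir comparison of Theorem \ref{T:2.1}. At $o=e$ we have $G_M=G\times G$ (acting by left and right translations), $K_M=\Delta G$, and $T_oM\cong\mathfrak g$; the subgroup $N=H$ is totally geodesic, with $G_N=H\times H$ and $K_N=\Delta H$. Identifying $T_o^{\perp}N$ with $\mathfrak m:=\mathfrak h^{\perp}\subset\mathfrak g$, the isotropy $K_N=\Delta H$ acts on $\mathfrak m$ by $\mathrm{Ad}\,H$. Because $G_N=H\times H$, the Casimir operator splits over the two factors, so a simple $G_N$-module $V=V_1\boxtimes V_2$ has $c(V)=c_H(V_1)+c_H(V_2)$, while $\mathcal P\cong(\mathfrak m\boxtimes\mathbf 1)\oplus(\mathbf 1\boxtimes\mathfrak m)$; the simple summands $P'$ of $\mathcal P$ are the modules $U\boxtimes\mathbf 1$ and $\mathbf 1\boxtimes U$ attached to the simple $H$-constituents $U$ of $\mathfrak m$, each with $c(P')=c_H(U)$ and each restricting under $\Delta H$ to $U\subseteq\mathfrak m$.

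Next I would bring in the two hypotheses. By (a) I choose a maximal torus $\mathfrak t\subset\mathfrak h$ which, having full rank, is maximal in $\mathfrak g$; then $\mathfrak m=\bigoplus_{\alpha\in R_G\setminus R_H}\mathfrak g_\alpha$, so the $H$-highest weight of every simple constituent $U\subseteq\mathfrak m$ is a root $\beta$ of $G$. Moreover $\mathfrak m$ carries no nonzero $\mathrm{Ad}\,H$-fixed vector: such a vector would centralize $\mathfrak h\supseteq\mathfrak t$, hence lie in $\mathfrak{z}_{\mathfrak g}(\mathfrak t)=\mathfrak t\subset\mathfrak h$, forcing it to vanish. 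Thus $N$ has no nonzero $G_N$-invariant normal section, so the elementary instability mechanism of Proposition \ref{P:3.1} is unavailable; note that its other hypothesis — that the centralizer of $G_N$ in $G_M$ be discrete — holds precisely by (b). Consequently any destabilization must come from the higher simple modules, which is exactly what Theorem \ref{T:2.1} is built to detect.

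I would then invoke Theorem \ref{T:2.1}: stability is equivalent to $c(V)\ge c(P')$ for every simple $V=V_1\boxtimes V_2$ in $\Gamma(T^{\perp}N)$ that shares, as a $\Delta H$-module, a constituent of $\mathfrak m$ with some $P'$. By the Bott--Frobenius description this occurrence means that a simple $U\subseteq\mathfrak m$ appears in $V_1\otimes V_2$. In view of the previous paragraph the inequality to prove is
\[
c_H(V_1)+c_H(V_2)\ \ge\ c_H(U),\qquad U\subseteq(V_1\otimes V_2)\cap\mathfrak m,
\]
with equality realized by the normal Killing fields $U\boxtimes\mathbf 1$ and $\mathbf 1\boxtimes U$ of Lemma \ref{L:2.1}. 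Writing $c_H(W)=\langle\lambda_W,\lambda_W+2\rho_H\rangle$ by the Freudenthal formula, the right-hand side is governed by the root $\beta$, and the condition $U\subseteq V_1\otimes V_2$ says that $\beta$ is a sum $w_1+w_2$ of a weight of $V_1$ and a weight of $V_2$.

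The hard part is exactly this last inequality, and it is where both hypotheses are indispensable. The mechanism I expect is that (a) makes $\beta$ an honest root of $G$, while (b) forces every weight of every $G$-representation into the root lattice $Q_G$, so that $\beta$ is \emph{minimal} in its coset and cannot be split as $w_1+w_2$ in a way that drops $c_H(V_1)+c_H(V_2)$ below $c_H(U)$; the infimum is then attained at the Jacobi fields. That integrality is essential, and not a formality, already shows in rank one: for $G=SU(2)\supset H=U(1)$ the center is nontrivial, the root of $G$ splits as a sum of two weights of the spin-$\tfrac12$ representation, and the circle is a great circle of $S^3$, hence \emph{unstable}; for the centerless $G=SO(3)\supset H=SO(2)$ no such splitting exists, the circle is a projective line in $\mathbb{RP}^3$, and it is stable. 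Turning this lattice-minimization heuristic into a proof in arbitrary rank — namely, that a root of a centerless $G$ is never a weight-sum of strictly smaller total Casimir after restriction to $H$ — is the genuine obstacle; I would attack it by isolating the contribution of a single positive root via the $\mathfrak t$-weight decomposition and estimating $\langle\lambda_i,\lambda_i+2\rho_H\rangle$ against $\langle\beta,\beta+2\rho_H\rangle$ using the dominance $\beta\preceq\lambda_1+\lambda_2$ together with the root-lattice constraint supplied by (b).
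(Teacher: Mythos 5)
Your reduction is set up correctly and in fact parallels the paper's framework: realizing $M$ as $(G\times G)/\Delta G$, splitting the Casimir over $G_N=H\times H$, identifying $\mathcal P\cong(\mathfrak m\boxtimes\mathbf 1)\oplus(\mathbf 1\boxtimes\mathfrak m)$, and using Theorem \ref{T:2.1} with Bott--Frobenius to reduce stability to the inequality $c_H(V_1)+c_H(V_2)\ge c_H(U)$ for every simple constituent $U\subseteq\mathfrak m$ occurring in $V_1\otimes V_2$. But your argument stops exactly where the work has to be done: you say yourself that establishing this inequality is ``the genuine obstacle,'' and what you offer in its place is a heuristic (root-lattice minimality of $\beta$ forced by (b)) together with a plan of attack. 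That inequality \emph{is} the proposition; without a proof of it, nothing has been proved. The rank-one examples you give are evidence that the hypotheses are needed, not that the inequality holds.

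The paper never proves this general $H\times H$ inequality; it avoids it by a two-step structure your proposal misses. Step one is the case $N=T$, a maximal torus: there the normal bundle splits into $2$-dimensional root-space bundles $E'$, any simple $G_T$-module $V$ in $\Gamma(E')$ embeds in $F\otimes P'$ with $F$ a $2$-dimensional module of functions on $T$, and hypothesis (b) — via the fact that $K_T\cap A_T\cong\ker\epsilon$ acts trivially on $F$, where $\epsilon:K_N\times A_T\to G_T$ has $2$-torsion kernel — forces every weight $\varphi$ of $F$ to be an \emph{even}-integer combination of roots; the Casimir comparison then collapses to the elementary lattice inequality $\<\varphi+\alpha,\varphi+\alpha\>-\<\alpha,\alpha\>\ge 0$ with $\varphi$ dominant. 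Step two handles general $N\supset T$ with no Casimir computation at all: if $N$ were unstable, one restricts a destabilizing normal field $v$ to $T$ and averages \eqref{2.1} over $G_N$ (using that every tangent direction of $N$ is carried into a tangent direction of $T$ by some isometry, and that $K_N$ acts irreducibly on the tangent space of each simple or circle normal factor) to conclude that \eqref{2.1} for $v$ is a positive multiple of \eqref{2.1} for $v_{|\,T}$, contradicting the stability of $T$. This restriction-and-averaging device is precisely what circumvents the inequality you could not establish, so the gap in your proposal is not a technicality but the absence of the paper's central mechanism. A minor separate error: your claim that (b) makes the centralizer of $G_N$ in $G_M$ discrete is false in general — for $N=T$ the centralizer of $T\times T$ in $G\times G$ is $T\times T$ — though that paragraph is incidental, since ruling out the instability criterion of Proposition \ref{P:3.1} does not contribute to proving stability.
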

\begin{proof} The compact group manifold $M$ has $G_{M}=M_{L}\times M_{R},$ where $M_L$ is the left translations group $M\times \{1\}$ and $M_R$ the right
translation group; here $M_R$ acts ``to the left'' too, that is, $(1,a)$ carries $x$ into $xa^{-1}$. 

Similarly for $G_N$, $G_N$ is effective on every invariant neighborhood of $N$ in $M$ by (b). We first consider the case where $N$ is a maximal toral subgroup $T$ of $M$. Let $A_T$ denote the subgroup $\{(a,a^{-1}):a\in T\}$ of $G_N$. We have an epimorphism $\epsilon : K_{N}\times A_{T} \rightarrow G_T$ by the multiplication whose kernel, $ker\,\epsilon$, is the subgroup of elements of order 2. 

In order to use Theorem \ref{T:2.1}, we look at an arbitrary simple $G_T$-module $V$ in $\Gamma(E')$ where $E'$ is, as before, the vector bundle $G_{N}\mathcal P'$ defined from the simple $K_T$-submodule $\mathcal P'$ of the normal space. $\mathcal P'$ is a root space corresponding to a root $\alpha$ of ${\it g}_{_{M}}$. With $V$ we compare the space $P'$, a simple $K_N$-module in $\Gamma(E')$ which is defined from the members of the Lie algebra of $M_L$ taking values in $\mathcal P'$ at a point of $N$, or equivalently, which is obtained from $\mathcal P'$ by applying $N_R$ to the members of $\mathcal P'$. We want to show $c(V)\geq c(P').$ Since $\alpha\not= 0$ by (a), both $V$ and $P'$ have dimension 2 and these are isomorphic as $K_T$-modules. 

The relationship between $V$ and $P'$ can be made more explicitly. Namely, a basis for $P'$ is a global frame of $E'$ and therefore the sections in $V$ are linear combinations of the basis vectors whose components are functions on $N$. These functions form a simple $G_T$-module $F$ of dimension 2 and $V$ is a $G_T$-submodule of $F\otimes P'$. By the Bott-Frobenius theorem, $K_T$ acts trivially on a 1-dimensional subspace
of $F$. Every weight $\varphi$ of $F$ is a linear combination of roots of ${\it g}_{_{M}}$ whose coefficients are even numbers. In fact all the weights of the representations of $G_T$ are linear combinations of those roots over the integers by (a) and (b) and, since
$K_{T}\cap A_{T} \cong ker\,\epsilon$ is trivial on $F$, the coefficients must be even.

On the other hand, if one looks at the definition of Casimir operator, $C=-\sum\ \rho(e_{\lambda})^2$, one sees that the eigenvalue $c(V)$ is a sort of average of
the eigenvalues of $-\rho(x)^2$, $||x||=1,$ or more precisely, $$c(V)=-\frac{1}{\dim\,V}\int_{||x||=1} trace(\rho(x))^2,$$ where the integral is taking over the unit sphere of the Lie algebra with an appropriately normalized invariant measure. For this reason, showing $c(V)\geq c(P')$, or equivalently, $(\varphi+\alpha)^{2}-\alpha^{2}\geq 0$
amounts to showing the inner product $$\<2\alpha + \varphi ,\alpha\>=\<\varphi+\alpha,\varphi+\alpha\>-\<\alpha,\alpha\>\,\,\geq 0$$ in which we may assume that $\varphi$ is dominant (with respect to the Weyl group of ${\it g}_{_{M}}$). And this concludes the proof for  $N=T$.

We turn to the general case $N\supset T.$ We assume $N$ is unstable and will show this contradicts the stability of $T$. There is then a simple $G_N$-module $V$ of normal vector fields to $N$ such that 
the second variation \e{2.1} is negative for some member $v$ of $V$. If we restrict $v$ to $T$ we still have a normal vector field but the integrand in \e{2.1} for $v_{|\,T}$ will differ from the restriction of the integrand for $v$ by the terms corresponding to the
tangential directions to $N$ which are normal to $T$. However, a remedy comes from the group action. First, \e{2.1}  is invariant under $G_N$ acting on $V$.
Second, every tangent vector to $N$ is carried into a tangent vector to $T$ by some isometry in $G_N$. 
Third, $N$ and $T$ are totally geodesic in $G$ (so $B=0$ in \e{2.1}, but more
importantly the connection and the curvature restrict to the submanifolds comfortably). And finally, the isotropy subgroup $K_N$ acts irreducibly on the tangent space to
each simple or circle normal subgroup of $G_N$. From all these it follows that \e{2.1} for $v$ is a positive constant multiple of \e{2.1} for $v_{|\,T}$, as one sees by integrating \e{2.1} for $g(v)_{|\,g(T)},\,g\in G$, over the group $G$ and over the unit sphere of $V$. \end{proof}

\begin{remark} Neither the assumption (a) or (b) can be omitted from Proposition \ref{P:3.2} as the examples of $M=SU(2)$ with $N=SO(2)$ and $M=G_2$ with $N=SO(2)$ show. Also the Proposition will be false if $M$ is not a group manifold, a counterexample being $M=M^{*}=GI$ with $N=S^{2}\cdot S^2$ (local product) among a few others to be explained in the next section. \end{remark}

\section{Stability of basic totally geodesic submanifolds}

By the ``{\it basic}'' totally geodesic submanifolds of a compact symmetric space $M$, we mean the submanifolds $M_+$ and $M_-$ introduced and studied in \cite{CN2}, which may defined as follows. Fix a point $o$ of $M$ and consider the symmetry $s_o$ of $M$ at $o$. Then $M_+$ is an arbitrary connected component $\ne \{o\}$ of the fixed point set $F(s_o,M)$ of $s_o$. And $M_-$, for each $M_+$ and a point $p\in M_+$, is the connected component through $p$ of $F(s_o s_p,M)$. The set of the isomorphism classes of the pairs $(M_+,M_-)$ is independent of $o$ and $p$ and determines $M$ completely.

In this section, $M=G_M/K_M$ is assumed to be 
irreducible and $M$ is the bottoms space $M^*$ defined in Definition \ref{D:3.1}.

\begin{proposition}\label{P:4.1} We have the following.

{\rm (a)} Among the compact connected
simple Lie groups $M^{*}$, the only ones that have
unstable $M_{+}^*$ are $SU(n)^*$, $SO(2n)^*$ with n odd,
$E_{6}^*$ and $G_2$. 

{\rm (b)}  The unstable $M_+$ are $G^{\mathbb C}(k,n-k),\, 0<k<n-k,$
for $SU(n)^*$; $SO(2n)/U(n)^*$ for $SO(2n)^*$; $EIII^*$
for $E_{6}^*$; and $M_{+}^*$ for $G_2$.

{\rm (c)}  Every $M_-$ is stable for the group $M^*$.\end{proposition}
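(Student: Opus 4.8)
The plan is to verify Proposition \ref{P:4.1} by systematically applying the algorithm established in Theorem \ref{T:2.1}, treating the three statements (a), (b), (c) as the output of a finite case-by-case computation rather than a single unified argument. The key principle driving everything is that for each simple Lie group $M^*$, one must enumerate the basic submanifolds $M_+$ and $M_-$ (these are classified in \cite{CN2}), and for each such totally geodesic $N = G_N/K_N$, decide stability by comparing Casimir eigenvalues $c(V)$ against $c(P')$ over the relevant simple $G_N$-modules. Since we are in the irreducible-isotropy or decomposable-isotropy setting of the theorem, the main labor is bookkeeping: identifying the $K_N$-module structure of $T_o^\perp N$, running the Bott--Frobenius criterion to list which simple $G_N$-modules $V$ occur, and computing the Freudenthal/Casimir eigenvalues.

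For the \emph{instability} direction in (a) and (b), I would lean heavily on Proposition \ref{P:3.1}: to show a given $M_+$ is unstable it suffices to exhibit a nonzero $G_N$-invariant normal vector field together with the discreteness of the centralizer of $G_N$ in $G_M$. For the listed exceptional cases --- the complex Grassmannians $G^{\mathbb C}(k,n-k)$ with $0<k<n-k$ inside $SU(n)^*$, the space $SO(2n)/U(n)^*$ inside $SO(2n)^*$ with $n$ odd, $EIII^*$ inside $E_6^*$, and $M_+^*$ inside $G_2$ --- the existence of such an invariant section is exactly the place where the asymmetry of the isotropy representation enters. Concretely, $T_o^\perp N$ will contain a trivial $K_N$-summand (a one-dimensional fixed subspace), which by homogeneity produces a $G_N$-invariant normal field; this is why the antisymmetric cases ($k\ne n-k$, $n$ odd, etc.) are singled out. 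The complementary claim, that all \emph{other} simple $M^*$ and all other $M_+$ are stable, requires the harder positive estimate $c(V)\ge c(P')$ for every admissible $V$, which is where the bulk of the Casimir arithmetic lives.

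For statement (c), that every $M_-$ is stable, I expect a more uniform argument. The submanifold $M_-$ is the connected component of $F(s_o s_p, M)$, and its defining structure tends to force the isotropy action on the normal space to be \emph{reflective} in the sense that $K_N$ contains an element (a reflection inherited from the symmetry $s_o s_p$) acting as $-1$ on $T_o^\perp N$, or more precisely with no nontrivial fixed vector. This rules out any $G_N$-invariant normal section, so Proposition \ref{P:3.1} gives no instability, and one then checks the Casimir inequality directly. The phenomenon is exactly the one illustrated in Example \ref{E:3.1}, where passing to the bottom space introduces a reflection in $N'$ that precludes invariant normal fields and yields stability; I would argue that $M_-$ is structurally always of this reflective type.

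The main obstacle, as I see it, is the positive Casimir estimate in the stable cases: showing $c(V)\ge c(P')$ for \emph{every} simple $G_N$-module $V$ sharing a $K_N$-constituent with the normal space. Unlike the instability direction, which needs just one witnessing section, this is a universally quantified inequality over an a priori infinite family of $V$'s, tamed only through the Bott--Frobenius restriction to those $V$ whose $K_N$-restriction meets $T_o^\perp N$. The computation reduces, as in the proof of Proposition \ref{P:3.2} for the toral case, to an inner-product inequality of the form $\<2\alpha+\varphi,\alpha\>\ge 0$ for dominant weights $\varphi$, but here the roots and weights must be organized relative to the specific $G_N \subset G_M$ geometry for each of the exceptional and classical families. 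The delicate part is that the boundary cases --- precisely the four families in (b) --- are where this inequality \emph{fails}, so the estimate must be sharp enough to detect exactly that failure and no more; getting the normalization of the metric on $\mathcal P$ and the eigenspaces of $\bar S$ correct (as flagged in the parenthetical remark of Theorem \ref{T:2.1}'s proof) is what makes the dividing line come out right.
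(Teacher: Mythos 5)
There are two genuine gaps in your proposal, both at points where you guessed a uniform mechanism that the paper explicitly avoids.

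First, the $G_2$ case of (b). You claim that for all four unstable families, including $M_+^*$ in $G_2$, instability comes from Proposition \ref{P:3.1} via a trivial $K_N$-summand of $T_o^\perp N$. The paper states the opposite: instability is established by Proposition \ref{P:3.1} \emph{except} for $G_2$. For $G_2$ the isotropy group $K_+ = SO(4)$ has no nonzero fixed vector in the normal space, so no invariant normal field exists and your mechanism produces nothing. The paper instead runs Theorem \ref{T:2.1} honestly: one computes $c(\tilde\omega_1) < c(\tilde\omega_2) = c(P')$ (the adjoint representation), and then verifies the Bott--Frobenius condition for $\tilde\omega_1$ by tracing the $7$-dimensional embedding $G_2 \hookrightarrow SO(7)$, under which $K_+ = SO(4)$ lands in $SO(4)\times SO(3)$ and projects to $SO(3)$, showing that $\tilde\omega_1$ restricted to $K_+$ shares a constituent with the normal space. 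Detecting this case requires the Casimir comparison plus a nontrivial branching computation, not an invariant section; your proposal as written would wrongly conclude that the $G_2$ case cannot be decided by your instability criterion and would leave it unresolved.

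Second, part (c). The paper's proof is one line: $M_-$ always has the same rank as $M$ (a fact from \cite{CN2}), and since $M = M^*$ is a centerless group manifold, Proposition \ref{P:3.2} applies directly and gives stability uniformly --- the inner-product inequality $\<2\alpha+\varphi,\alpha\> \geq 0$ was already done there once and for all. Your proposed argument (that $K_N$ acts on $T_o^\perp N$ without nonzero fixed vectors, so Proposition \ref{P:3.1} cannot apply) only blocks one \emph{instability} criterion; it proves nothing about stability, since the failure of a sufficient condition for instability is not a proof of the positive Casimir estimate $c(V) \geq c(P')$ over all admissible $V$. You then defer to ``checking the Casimir inequality directly,'' which is precisely the universally quantified estimate you elsewhere identify as the main obstacle --- so statement (c), which the paper disposes of immediately, remains entirely open in your approach. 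A smaller omission: the paper also needs the \emph{global} types of $M_+$ and $M_-$ (via Lefschetz numbers, Euler characteristics, and Takeuchi's computation of $\pi_2$), e.g.\ to know $K_+$ is connected and $M_- = S(U(k)\times U(n-k))$ has a circle center in the $SU(n)^*$ case, which is what actually produces the invariant normal field your first paragraph takes for granted.
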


{\bf Comments on the Proof.} 

(I) The stability of $M_-$ is immediate from Proposition \ref{P:3.2} since $M_-$ has the same rank as $M$ (see \cite{CN2}). Otherwise the proof is based on scrutinizing all the individual cases and omitted except for a few cases to illustrate our methods.

(II) Take $M^{*}=SO(2n+1)$. Then $M_{+}= G^{\mathbb R}(k,2n+1-k),\, 0<k<n-k,$ the Grassmannians of the unoriented $k$-planes in $\mathbb E^{2n+1}$ by Table I in \cite{CN2}. The action of $G_{+}=SO(2n+1)$ on $P$ (in the notation of Lemma \ref{L:2.1}) is the adjoint representation corresponding to the highest weight ${\tilde \omega}_2$ in Bourbaki's notation \cite{B}. By Freudenthal's formula, one finds that ${\tilde\omega}_1$ is the only representation that has a smaller eigenvalue than ${\tilde\omega}_{2}; c({\tilde \omega}_{1})<c({\tilde\omega}_{2}).$ But ${\tilde\omega}_1$ does not meet the Bott-Frobenius condition simply because its dimension $2n+1$ is too small. Therefore, $M_+$ is stable by Theorem \ref{T:2.1}. 

(III) While tables in \cite{CN2} describe local types of $M_+$ and $M_-$, we actually need their global types. In most cases the following method is enough for this. The Lefschetz number of $s_o$ for $M^*$ is clearly $2^r$, where $r$ is the rank of $M^*$. This equals the sum of the Euler number ${\mathcal X}(M_+)$ of all $M_+$ and that of $\{o\}$. Each ${\mathcal X}(M_+)$ is positive by Corollary 3.7 of \cite{CN2}. In the above case (II), one checks $2^n=\sum_p \binom{n}{p}$ and concludes the given $M_+$ are the right ones. In some cases which appear in the next proposition, one has to use the homotopy group $\pi_2(M)$ which was determined by Takeuchi \cite{T}.

(IV) Take $SU(n)^*$ for another example. We know $M_{+}=G_{+}/K_{+}=G^{\mathbb C}(k,n-k),$ the complex Grassmann manifold. If $k\not= n-k,\, M_+$ is simply-connected and hence $K_+$ is connected. 

On the other hand, $M_{-}=K_{+}=S(U(k)\times U(n-k)),$ which contains a circle group as the center. Therefore, $M_+$ admits a unit $G_+$-invariant normal vector field. Moreover, the centralizer  of $G_+$ in $G_M$ is trivial. Hence
Proposition \ref{P:3.1} applies to conclude that $M_+$ is unstable. This argument fails in the case $k=n-k$ and we can conclude the stability of $M_+$ by Theorem \ref{T:2.1}  as in (I). 

(V) Instability is established by means of Proposition \ref{P:3.1} except for the case of $G_2$. In this case we have $c({\tilde\omega}_{1})<c({\tilde\omega}_{2})=c$ (the adjoint representation). This ${\tilde\omega}_1$ gives a monomorphism of $G_2$ into $SO(7)$ which restricts to a monomorphism of $K_{+}=SO(4)$ into $SO(4)\times SO(3)$ in $SO(7)$ and then projects to $SO(3)$. This implies that ${\tilde\omega}_1$ appears in a space of normal vector fields.
  
\begin{proposition}\label{P:4.2} Let $M^*$ be a compact symmetric space $G/K$ with $G$ simple. Then, among the $M_+$ and $M_-$, the unstable minimal submanifolds are $G^{\mathbb R}(k,n-k),\,k<n-k,$ in $AI(n)^*$; $G^{\mathbb H}(k,n-k),\, k<n-k,$ in $AII(n)^*$; $SO(k)$  in $G^{\mathbb R}(k,k)$ with k odd; $M_{+} = M_{-}=SO(2)\times AI(n)$ in $CI(n)^*$;
$M_{+}=M_{-}=SO(2)\times AII({n\over 2})$ in $DIII^{*}=
SO(2n)/U(n)$ with n even; $G^{\mathbb H}(2,2)$ in $EI^*$; $FII$ in
$EIV^*$; $AII(4)$ in $EV^*$; and $M_{+}=M_{-}= S^{2}\cdot S^{2}$ in $GI$. \end{proposition}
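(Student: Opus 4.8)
The plan is to go case by case through Cartan's list of irreducible compact symmetric spaces $M^*=G/K$ with $G$ simple, testing each of the finitely many $M_+$ and $M_-$ by the two tools now available: Proposition \ref{P:3.1} for instability and Theorem \ref{T:2.1} for stability. The local types of $M_+$ and $M_-$ are taken from the tables of \cite{CN2}; their \emph{global} types, and in particular whether the isotropy group $K_+$ is connected and whether its center contains a circle, are fixed by the Lefschetz-number and Euler-characteristic bookkeeping of (III) and by the values of $\pi_2(M)$ from Takeuchi \cite{T}, just as in the preceding proposition. Since $M^*$ is a bottom space, the centralizer of $G_N$ in $G_M$ is discrete, so the second hypothesis of Proposition \ref{P:3.1} holds automatically throughout, and instability via that proposition comes down to the single question of whether the isotropy representation on the normal space $T_{o}^{\perp}N$ contains a trivial $K_N$-summand.

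First I would settle the unstable entries. In each unbalanced Grassmannian, $G^{\mathbb R}(k,n-k)$ and $G^{\mathbb H}(k,n-k)$ with $k<n-k$, and in the self-paired cases $SO(2)\times AI(n)\subset CI(n)^*$ and $SO(2)\times AII(n/2)\subset DIII^*$, the group $K_+$ has a central circle acting on a normal line; its generator restricts to a nonzero $G_N$-invariant normal field and Proposition \ref{P:3.1} forces $\mathcal V''<0$. The same trivial-summand test disposes of $G^{\mathbb H}(2,2)\subset EI^*$, $FII\subset EIV^*$ and $AII(4)\subset EV^*$ once the normal isotropy module is decomposed. The two remaining unstable cases, $SO(k)\subset G^{\mathbb R}(k,k)$ with $k$ odd and $S^2\cdot S^2\subset GI$, admit no trivial normal summand, so Proposition \ref{P:3.1} does not apply; here I would instead invoke Theorem \ref{T:2.1}, exhibiting a simple $G_N$-module $V$ with $c(V)<c(P')$ that \emph{does} meet the Bott-Frobenius condition, the case $GI$ reusing the Freudenthal inequality $c(\tilde\omega_1)<c(\tilde\omega_2)$ recorded for $G_2$ in (V).

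Stability of every $M_\pm$ not on the list is then proved by Theorem \ref{T:2.1}. For the maximal-rank meridians $M_-$, the Casimir comparison reduces, as in the toral computation inside the proof of Proposition \ref{P:3.2}, to an inequality of the form $\langle 2\alpha+\varphi,\alpha\rangle\geq 0$ for the dominant weight $\varphi$ supplied by Bott-Frobenius and a normal root $\alpha$, which I would verify in each case. For the polars $M_+$ with centerless isotropy, among them the balanced Grassmannians $G^{\mathbb R}(k,k)$ with $k$ even and $G^{\mathbb H}(k,k)$, Proposition \ref{P:3.1} is unavailable because a reflection lying in $G_N$ annihilates any candidate invariant normal field, exactly as for the projective equator in Example \ref{E:3.1}; a Freudenthal computation then shows that every representation with Casimir eigenvalue below $c(P')$ has dimension too small to satisfy the Bott-Frobenius condition, just as in (I) and (II) of the preceding proposition.

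The real work, and where a slip is most likely, is twofold. The first difficulty is the determination of the \emph{global} structure of $K_+$: whether it is connected and whether its center contains a circle is precisely what decides the trivial-summand test for Proposition \ref{P:3.1}, yet this information is absent from the local tables and must be teased out of $\pi_1$ and $\pi_2(M)$, where a single misread component group reverses the conclusion. The second is the block of explicit Freudenthal calculations in the exceptional and borderline cases $EI^*,EIV^*,EV^*,GI$ and $SO(k)\subset G^{\mathbb R}(k,k)$, in which one must confirm both that $c(V)\geq c(P')$ for \emph{all} competing modules and that no competitor of smaller Casimir eigenvalue survives the Bott-Frobenius dimension bound. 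I expect the global-type analysis to be the principal obstacle.
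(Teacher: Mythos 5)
Your overall strategy (case-by-case over Cartan's list, Proposition \ref{P:3.1} for instability, Theorem \ref{T:2.1} for stability, global types pinned down via Lefschetz numbers, Euler characteristics and $\pi_2$) is the same as the paper's, but there is a genuine error at the step you rely on most heavily. Your claim that ``since $M^*$ is a bottom space, the centralizer of $G_N$ in $G_M$ is discrete, so the second hypothesis of Proposition \ref{P:3.1} holds automatically throughout'' is false. Passing to the bottom space kills the center of $G_M$; it does nothing to the centralizer of a \emph{proper} subgroup $G_N$. And it fails precisely in two of the families you then feed into Proposition \ref{P:3.1}: for $M_+=M_-=SO(2)\times AI(n)=U(n)/O(n)$ in $CI(n)^*$, the centralizer of $G_N\cong U(n)$ in $Sp(n)$ is the full circle of complex scalars $e^{i\theta}I$ (the commutant of $U(n)$ in $M_n(\mathbb H)$ is $\mathbb C$), and similarly for $SO(2)\times AII(n/2)=U(n)/Sp(n/2)$ in $DIII^*$ the centralizer of $U(n)$ in $SO(2n)$ is the circle generated by the complex structure $J$; these circles survive in the adjoint group. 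This is exactly why the paper says in its comment (II) that in the cases $M_+=M_-=SO(2)\times L$ ``this proposition does not literally apply but instability is proven in the same spirit'': one still has the $G_N$-invariant parallel unit normal field $v$ (coming from the trace part of the symmetric bilinear forms, the trivial $O(n)$-summand of the normal space), but the positivity $R(v,v)>0$ can no longer be deduced from a discreteness hypothesis and must be proved directly. The paper does this by using the equal-rank property: since $M_+=M_-$ has the same rank as $M$, at each point there is a tangent vector $X$ such that the 2-plane spanned by $X$ and $v$ has positive curvature, whence $\mathcal V''(v)<0$ by \eqref{2.1}. As written, your argument for the $CI(n)^*$ and $DIII^*$ entries concludes nothing, since the hypothesis you invoke is exactly the one that fails there.

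The rest of your outline is consistent with what the paper does, with two smaller remarks. For $S^2\cdot S^2$ in $GI$ the paper indeed argues via Theorem \ref{T:2.1}: one must pass to the double cover $SO(4)$ of the adjoint group $G_-$ to act near $M_-$, and the identity representation of $SO(4)$ on $\mathbb E^4$ restricts to the normal representation of $K_-=SO(2)\times SO(2)$, so it occurs among normal fields and has too small a Casimir eigenvalue; your plan matches this (though note the subtlety that the relevant module only exists after taking the covering group, which your ``bottom space'' framing would obscure). For the stable cases the paper also records a shortcut you omit: when $M^*$ is K\"ahlerian and $M_\pm$ is a complex submanifold, stability is immediate from Federer's theorem, which spares a good number of Casimir computations.
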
 

{\bf Comments on the Proof.} 

(I) In some cases, one can use another method to get the results quickly. For instance, if $M^*$ is K\"ahlerian, then it is well-known that every compact complex submanifold is stable (Federer \cite{F}). 

(II) Mostly, instability is established by using Proposition \ref{P:3.1}. In the cases, $M_{+}=M_{-}=SO(2)\times L$, this proposition does not literally apply but instability is proven in the same spirit. Consider, say $SO(2)\times AI(n)$ in $CI(n)^*$. This space in $M^*$ is $U(n)/O(n)$. The normal space is isomorphic with the space of the symmetric bilinear forms on $\mathbb E^n$ as an $O(n)$-module. Therefore, there is a $U(n)$-invariant unit normal vector field $v$ on $M_+$. We have $\nabla v =0$ (cf. Theorem 3.2 in \cite[page 23]{KN}). We have to show $R(v,v)>0$ in view of \e{2.1}. Since $M_{+}=M_-$ has the same rank as $M$, there is a tangent vector $X$ in $T_{y}M_-$ such that the curvature of the 2-plane spanned by $X$ and $v(y)$ is positive. 

(III) The case of $M_{+}=M_{-}=S^{2}\cdot S^2$ in $GI$. Precisely, $M_{+}=M_-$ is obtained from $$S^{2}\times S^{2} =  {\rm(the\;  unit\;  sphere\;  in}\; \mathbb E^{3})\; \times \; {\rm the\; unit\; sphere\; in} \; \mathbb E^{3}) \subset \mathbb E^{3}\times \mathbb E^3$$ by identifying $(x,y)$ with $(-x,-y)$. 

The group $G_-$ for $M_{-}=G_{-}/K_-$ is the adjoint group but we have to take its double covering group $SO(4)$ to let it act on a neighborhood of $M_-$. The identity representation of $SO(4)$ on $\mathbb E^4$ restricts to the normal representation of $K_{-}=SO(2)\times SO(2)$ as somewhat detailed examination of the root system reveals.
Therefore, $M_-$ is unstable. Similarly for $M_+$ which is congruent with $M_-$.
\vskip.1in

\begin{remark} From the known facts about geodesics, one would not expect a simple relationship between stability and homology. More specifically, we remark that $M_+$ {\it is homologous to zero for a group manifold $M^{*}$.\/} The proof may go like this. Consider the quadratic map (a sort of Frobenius map) $f: x \mapsto s_{x}(o)$ on a symmetric space $M=G/K$ for a fixed point $o$, where $s_x$ is the symmetry at $x$. Assume $M$ is compact and orientable. Then $f$ has a nonzero degree if and only if the cohomology ring $H^{*}(M)$ is a Hopf algebra (cf. M. Clancy's thesis, University of Notre Dame, 1980). 

On the other hand, the inverse image $f^{-1}(o)$ is exactly $M_+$ and $\{o\}$. Since $H^{*}(M)$ is a Hopf algebra for a group $M^*$, it follows that every $M_+$ is homologous to zero.
\end{remark}

\section{Stability of totally real submanifolds}

A submanifold $N$ of a K\"ahlerian manifold $M$ is said to be {\it totally real} if $J(TN)$ is a subbundle of the normal bundle $T^\perp N$, where $J$ is the complex structure of $M$ (cf. for instance, \cite{CO}).

\begin{example}\label{E:5.1}{\rm Let $M$ be the complex Grassmann manifold $G^{\mathbb C}(k,n-k)$. Then the complex conjugation $c$ of ${\mathbb C}^n$ of which $\mathbb R^n$ is the fixed point set $F(c,\mathbb C^n)$ is induced on $G^{\mathbb C}(k,n-k)$, giving rise to an involutive isometry, also denote by $c$. 

$F(c,G^{\mathbb C}(k,n-k))$ is $G^{\mathbb R}(k,n-k)$, which is thus totally real and totally geodesic. More generally, if $M$ is a compact K\"ahlerian symmetric space of tube type, then the Shilov boundary of the dual domain is totally real, according to J. A. Wolf to whom we are grateful for the information.}\end{example}

In this section we make the following
\vskip.1in

{\bf Assumption 5.1.} {\it $N$ is a compact, connected, $n$-dimensional, minimal and totally real submanifold of a $2n$-dimensional K\"ahlerian manifold $M$.}
\vskip.1in

We begin with rewriting the second variation formula \e{2.1} in terms of the tangent vector field $u=Jv$ to $N$. We will show \e{2.1} is then equal to the integral \e{5.1} below, which is remarkable in that \e{5.1} does not involve the second fundamental form $B$ explicitly.

\begin{theorem}\label{T:5.1} The  minimal submanifold $N$ under Assumption 5.1 is stable if and only if
\begin{align}\label{5.1}\int_{N} \{||\nabla u ||^{2}+R^N(X,X)-R^M(X,X)\}*1\end{align}
is nonnegative for every tangent vector field $u$ on $N$, where $\nabla$ denotes the tangential connection too and $R^M$ and $R^N$ denote the Ricci forms
of $M$ and $N$, respectively.\end{theorem}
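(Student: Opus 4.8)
The plan is to use the complex structure $J$ to trade normal fields for tangent fields and then rewrite the three terms of the second variation formula \e{2.1} one at a time. Since $N$ is totally real of half dimension, $J$ restricts to a bundle isometry $T^\perp N \to TN$, $v\mapsto u:=Jv$; as $v$ runs over all normal fields $u$ runs over all tangent fields, so stability (nonnegativity of \e{2.1} for every $v$) is equivalent to nonnegativity of the resulting tangential integral for every $u$. Throughout I write $\nabla$ for the tangential Levi--Civita connection of $N$, $\nabla^\perp$ for the normal connection (the object called $\nabla$ in \e{2.1}), $B$ for the second fundamental form and $A_v$ for the associated Weingarten map, and I read the argument $X$ in \e{5.1} as the field $u$. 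The two tools that make everything collapse are the Kähler identity $\nabla^M J=0$ together with its curvature consequences, and the minimality relation $\sum_i B(e_i,e_i)=nH=0$. Note that the term $R(v,v)=\sum_i\langle R^M(e_i,v)v,e_i\rangle$ of \e{2.1} is a contraction over tangent directions only, hence is \emph{not} the full Ricci form $R^M$.

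First I would handle the connection term. Differentiating $u=Jv$ along a tangent vector $X$ and using $\nabla^M J=0$ gives $\nabla^M_X u=J\nabla^M_X v$; expanding the left side by the Gauss formula and the right side by the Weingarten formula $\nabla^M_X v=-A_v X+\nabla^\perp_X v$, then comparing tangential and normal parts, yields
\begin{align}\label{planA} \nabla_X u = J\,\nabla^\perp_X v, \qquad B(X,u) = -J\,A_v X. \end{align}
The first identity, together with the fact that $J$ is an isometry, gives $||\nabla u||^2=||\nabla^\perp v||^2$ termwise, so the first terms of \e{2.1} and \e{5.1} already agree. The second identity is the bridge needed for the second fundamental form term.

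Next I would treat the two curvature terms together. Adjoining $(Je_i)$ to the orthonormal tangent frame $(e_i)$ produces an orthonormal frame of $T_xM$, so the Ricci form of $M$ splits as
\begin{align}\label{planB} R^M(v,v) = \sum_i \langle R^M(e_i,v)v, e_i\rangle + \sum_i \langle R^M(Je_i,v)v, Je_i\rangle, \end{align}
whose first sum is precisely the term $R(v,v)$ of \e{2.1}. Writing $v=-Ju$ and applying the Kähler curvature identities $R^M(JA,JB)=R^M(A,B)$ and $R^M(A,B)J=J\,R^M(A,B)$ rewrites the second sum as $\sum_i\langle R^M(e_i,u)u,e_i\rangle$, while the Kähler invariance of Ricci gives $R^M(v,v)=R^M(u,u)$. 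The Gauss equation applied to $\sum_i\langle R^M(e_i,u)u,e_i\rangle$, with the mean-curvature term killed by minimality, produces $R^N(u,u)+\sum_i||B(e_i,u)||^2$, and the second identity in \e{planA} turns $\sum_i||B(e_i,u)||^2$ into $\sum_i||A_v e_i||^2=\mathrm{trace}(A_v^2)=||Bv||^2$. Collecting these relations gives $R(v,v)=R^M(u,u)-R^N(u,u)-||Bv||^2$, whence $-R(v,v)-||Bv||^2=R^N(u,u)-R^M(u,u)$, and the integrand of \e{2.1} becomes exactly that of \e{5.1}.

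The main obstacle is purely the curvature bookkeeping in the previous paragraph: one must invoke the correct Kähler curvature symmetries to fold the normal half of \e{planB} back onto the tangential directions, and keep the sign conventions in the Gauss equation consistent so that it is precisely the mean-curvature term that vanishes. The conceptually decisive point is the exact cancellation: the trace $\sum_i||B(e_i,u)||^2$ coming from the Gauss equation reproduces $||Bv||^2$ verbatim via \e{planA}, which annihilates the explicit second fundamental form term and leaves \e{5.1} free of $B$---the feature the statement advertises. Once this is verified, the equivalence of stability with nonnegativity of \e{5.1} follows from the bijection $v\leftrightarrow u=Jv$.
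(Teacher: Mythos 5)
Your proof is correct and takes essentially the same route as the paper's: trade $v$ for $u=Jv$, note $\|\nabla v\|=\|\nabla u\|$ from $\nabla J=J\nabla$, split the ambient Ricci curvature over the adapted frame $(e_i,Je_i)$, apply the K\"ahler curvature identities and the Gauss equation with minimality, and cancel the second-fundamental-form terms via the $J$-intertwining of $B$ with the shape operator. If anything, your sign bookkeeping is more careful than the paper's intermediate display (which shows $+\sum\|B^*(e_i)u\|^2$ where the final cancellation against $-\|Bv\|^2$ requires the opposite sign), but the argument and conclusion are the same.
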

\begin{proof} Since $J$ is parallel, we have (see \cite{CO} and \cite[page 145]{KN})
\begin{align}\label{5.2} & \nabla\circ J=J\circ \nabla,
\\& \label{5.3} B\circ J=J\circ B^*\;\; {\rm on}\; \; TN,
\\&\label{5.4} K^M\circ J=J\circ K^M.\end{align}
Thus, we obtain $||\nabla v||=||\nabla Jv||=||\nabla u||$ by \e{5.2}. Fix an orthonormal basis $(e_i)$ for a tangent space to $N$, we have 
\begin{equation}\begin{aligned} \notag R(u,u):&=\sum \<K^M(e_i,v)v,e_i\>
\\ &=\sum \<K^M(e_i,Ju)Ju,e_i\>
\\ &=\sum \<K^M(Je_i,u)u,Je_i\>
\\&= R^M(u,u)-\sum \<K^M(e_i,u)u,e_i\>
\end{aligned}\end{equation}
by \e{5.4}. Thus the Gauss formula $K^M=K^N+B\wedge B^*$ yields
\begin{equation}\begin{aligned} \notag R(v,v)&= R^M(u,u)-R^N(u,u)+\sum ||B^*(e_i)u||^2\\& \hskip.4in -\sum\<B^*(e_i)e_i,B^*(u)u\>
\\&= R^M(u,u)-R^N(u,u)+||B^* u||^2,
\end{aligned}\end{equation}
since $N$ is minimal. Therefore
\begin{equation}\notag ||\nabla v||^2 -R(v,v)-||Bv||^2=||\nabla v||^2-R^M(u,u)+R^N(u,u),\end{equation}
which implies the theorem.
\end{proof}

By using Proposition 4.1 we have the following results.

\begin{theorem}\label{T:5.2} Under Assumption 5.1, we have

{\rm (1)}  If $R^M>0$ (i.e., $M$ has positive Ricci tensor) and  $H^{1}(N;{\bf R})\ne 0$, then $N$ is unstable.

{\rm (2)}  If $R^M\leq 0$ (i.e., $M$ has nonpositive Ricci tensor), then $N$ is always stable.
\end{theorem}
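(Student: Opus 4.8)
The plan is to reduce the entire statement to the Bochner--Weitzenb\"ock formula for $1$-forms on the compact manifold $N$, using Theorem \ref{T:5.1} to rephrase stability purely in terms of the Ricci forms. Identifying the tangent vector field $u$ with its dual $1$-form via the metric, the Weitzenb\"ock formula reads $\Delta_H u=\nabla^{*}\nabla u+R^{N}(u)$, where $\Delta_H=d\delta+\delta d$ is the Hodge Laplacian, $\nabla^{*}\nabla$ is the (nonnegative) connection Laplacian, and $R^{N}$ is the Ricci operator of $N$. Pairing with $u$ and integrating over the compact $N$ yields the key identity
\begin{equation}\notag
\int_{N}\{\,||\nabla u||^{2}+R^{N}(u,u)\,\}*1=\int_{N}\{\,||du||^{2}+||\delta u||^{2}\,\}*1\;\geq 0,
\end{equation}
valid for every tangent vector field $u$ on $N$. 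Since $R^{N}$ here is exactly the Ricci form appearing in \e{5.1}, this expression matches the first two terms of the integrand of \e{5.1} verbatim.

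With this in hand, part (2) is immediate. When $R^{M}\leq 0$ one has $-R^{M}(u,u)\geq 0$ pointwise, so the integrand in \e{5.1} splits as the sum of the manifestly nonnegative Bochner expression $||\nabla u||^{2}+R^{N}(u,u)$ and the nonnegative term $-R^{M}(u,u)$. Hence \e{5.1} is nonnegative for every $u$, and Theorem \ref{T:5.1} gives stability.

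For part (1) I would invoke Hodge theory: since $N$ is compact and $H^{1}(N;{\bf R})\ne 0$, there is a nonzero harmonic $1$-form $\omega$, so $d\omega=0$ and $\delta\omega=0$. Taking $u=\omega^{\sharp}$ in the key identity forces $\int_{N}\{||\nabla u||^{2}+R^{N}(u,u)\}*1=0$, so that the integral \e{5.1} collapses to $-\int_{N}R^{M}(u,u)*1$. Because $R^{M}>0$ is positive definite and $u$ is a nonzero section on the connected manifold $N$, the integrand $R^{M}(u,u)$ is $\geq 0$ everywhere and strictly positive on the nonempty open set where $u\ne 0$; hence $\int_{N}R^{M}(u,u)*1>0$ and \e{5.1} is strictly negative for this $u$. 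By Theorem \ref{T:5.1}, $N$ is unstable.

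The only real content is the Weitzenb\"ock identity and the sign bookkeeping it entails; the point I expect to need the most care is confirming the sign convention (so that the Bochner term $||\nabla u||^{2}+R^{N}(u,u)$ is nonnegative, rather than its negative) and that the $R^{N}$ of the Weitzenb\"ock formula coincides with the Ricci form used in \e{5.1}. Once that is pinned down, both parts follow with no further computation, the distinction between them being only whether the Bochner term is merely $\geq 0$ (general $u$, part (2)) or exactly $0$ (harmonic $u$, part (1)).
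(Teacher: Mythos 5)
Your proof is correct and takes essentially the same route as the paper: the paper also rewrites \e{5.1} via the integrated Bochner identity $\int_{N}\{||\nabla u||^{2}+R^{N}(u,u)\}*1=\int_{N}\{\frac{1}{2}||d\alpha||^{2}+||\delta u||^{2}\}*1$ (cited from Yano, and equal to your Weitzenb\"ock computation up to the normalization of $||d\alpha||^{2}$), deduces (2) from the resulting manifestly nonnegative expression when $R^{M}\leq 0$, and proves (1) by taking $u$ dual to a nonzero harmonic $1$-form furnished by Hodge theory. Your only additions are making the derivation of the key identity and the strict-negativity argument explicit, which the paper leaves to the reader.
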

\begin{proof} Let $\alpha$ be the 1-form dual to a vector field $u$ tangent to $N$. Then the following formula is well-known (see, for instance, \cite[page 41]{Y})
$$\int_{N} \{||\nabla u ||^{2}+R^N(u,u)\}*1 = \int_{N} \left\{ {1\over 2} ||d\alpha ||^{2}+||\delta u ||^{2}\right\}*1,$$
where $\delta$ is the codifferential operator. Thus \e{5.1} becomes 
\begin{align}\label{5.5}\int_{N} \left\{{1\over 2}||d\alpha ||^{2}+||\delta u||^{2}-R^M(u,u)\right\}*1\end{align}
which implies (b) obviously. 

(a) follows from  formula \e{5.5} by choosing $u$ to be the vector field dual to a nonzero harmonic 1-form $\alpha$ on $N$.
\end{proof}

\begin{proposition}\label{P:5.1} Under Assumption 5.1, $N$ is stable if $N$ satisfies the condition {\rm (i)} or {\rm (ii)} below, and $N$ is unstable if $N$ satisfies condition {\rm (iii)}:

{\rm (i)}  $i^{*}R^M \leq R^N$ where i is the inclusion: $N \rightarrow M$.

{\rm (ii)}   $i^{*}R^M \leq 2R^N$ and the identity map of $N$ is stable as a harmonic map.

{\rm (iii)} $i^{*}R^M > 2R^N$ and $N$ admits a nonzero Killing vector field.
\end{proposition}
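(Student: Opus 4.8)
The plan is to read off the sign of the stability integral \eqref{5.1} of Theorem \ref{T:5.1} in each of the three cases; write $I(u)$ for that integral. Two ingredients drive everything. The first is the pointwise comparison of the Ricci forms supplied by the hypotheses. The second is the Bochner identity $\int_N \|\nabla u\|^2 *1 = \int_N R^N(u,u)*1$, valid for every Killing field $u$ on the compact manifold $N$; equivalently, the Killing fields are exactly the Jacobi fields of the identity map of $N$ regarded as a harmonic map, whose index form (second variation of energy) is $\int_N \{\|\nabla u\|^2 - R^N(u,u)\}*1$.

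For (i) I would argue pointwise. The hypothesis $i^{*}R^M \le R^N$ says that $(R^N - i^{*}R^M)(u,u) \ge 0$ at every point, so the integrand of $I(u)$, namely $\|\nabla u\|^2 + (R^N - i^{*}R^M)(u,u)$, is a sum of nonnegative terms. Hence $I(u) \ge 0$ for every tangent field $u$, and $N$ is stable by Theorem \ref{T:5.1}. For (ii) I would instead use $i^{*}R^M \le 2R^N$ in the form $-R^M(u,u) \ge -2R^N(u,u)$, which gives $I(u) \ge \int_N \{\|\nabla u\|^2 - R^N(u,u)\}*1$. The right-hand side is precisely the index form of the identity map of $N$ as a harmonic map, and this is nonnegative for every $u$ by the stability assumption; therefore $I(u) \ge 0$ and $N$ is again stable.

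For (iii) I would evaluate $I$ on the given nonzero Killing field $u$. Since $u$ is Killing, the Bochner identity yields $\int_N \{\|\nabla u\|^2 - R^N(u,u)\}*1 = 0$, so that $I(u) = \int_N \{2R^N(u,u) - R^M(u,u)\}*1$. The hypothesis $i^{*}R^M > 2R^N$ makes the integrand strictly negative at every point where $u \neq 0$; as a nontrivial Killing field vanishes only on a closed set of measure zero, we conclude $I(u) < 0$, and hence $N$ is unstable.

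The main thing to get right is the Bochner step and its sign. I would either quote the standard integral formula for Killing fields or, staying within the framework of the proof of Theorem \ref{T:5.2}, derive it from the identity $\int_N \{\|\nabla u\|^2 + R^N(u,u)\}*1 = \int_N \{\tfrac{1}{2}\|d\alpha\|^2 + \|\delta u\|^2\}*1$ together with $\delta u = 0$, which holds because a Killing field is divergence-free; once the normalization of $\|d\alpha\|^2$ is accounted for, this collapses exactly to $\int_N \|\nabla u\|^2 *1 = \int_N R^N(u,u)*1$. The only other point that needs a word is that the zero set of the Killing field has measure zero, which is what upgrades the conclusion in (iii) to the strict inequality $I(u) < 0$ rather than merely $I(u) \le 0$.
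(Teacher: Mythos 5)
Your proof is correct and follows essentially the same route as the paper's: (i) is read off pointwise from Theorem \ref{T:5.1}, (ii) compares the integral \eqref{5.1} with Smith's second variation formula for the identity map, and (iii) evaluates \eqref{5.1} on the Killing field using the integral identity $\int_N\{\|\nabla u\|^2 - R^N(u,u)\}*1 = 0$. You merely supply details the paper leaves implicit, namely the derivation of that Killing-field identity from the Yano formula and the measure-zero argument that upgrades (iii) to a strict inequality.
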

\begin{proof} Stability follows from (i) immediately by Theorem \ref{T:5.1} and  from  (ii) by the theorem  and the fact that the second variation for the identity map is (cf. \cite{Sm})
$$\int_{N} \{||{\nabla u} ||^{2}-R^N(u,u)\}*1. $$  

Sufficiency of (III) follows from Theorem \ref{T:5.1} and the formula $$\int_{N} \{||\nabla u ||^{2}-R^N(u,u)\}*1=0$$ for a Killing vector field $u$. 
\end{proof}

Finally, we apply our method in section 2 to Example \ref{E:5.1}, although the condition (iii) in Proposition \ref{P:5.1} may be verified.

 \vskip.1in
\begin{proposition}\label{P:5.2} The minimal totally real totally geodesic submanifold $G^{\mathbb R}(p,q)$ is unstable in $G^{\mathbb C}(p,q)$.\end{proposition}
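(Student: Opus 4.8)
The plan is to apply Theorem \ref{T:2.1} directly, exploiting the fact that the normal isotropy representation here is irreducible, which places us in the clean ``$Q=kI$'' case of that theorem. Write $n=p+q$ and realize $N=G^{\mathbb R}(p,q)=SO(n)/(SO(p)\times SO(q))$ as a totally geodesic submanifold of $M=G^{\mathbb C}(p,q)=SU(n)/S(U(p)\times U(q))$, so that $G_N=SO(n)$, $K_N=SO(p)\times SO(q)$ and $\mathfrak g_N=\mathfrak{so}(n)\subset\mathfrak{su}(n)=\mathfrak g_M$. First I would compute the $G_N$-module $\mathcal P$: writing a member of $\mathfrak{su}(n)$ as $B+iC$ with $B\in\mathfrak{so}(n)$ and $C$ real symmetric and traceless, one sees that the orthogonal complement of $\mathfrak{so}(n)$ is $\mathcal P=\{\,iC\,\}$, which as an $SO(n)$-module is the irreducible space $S^2_0(\mathbb R^n)$ of traceless symmetric $2$-tensors, of highest weight $2\omega_1$. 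Next, since $N$ is totally real, $T_o^\perp N=J(T_oN)$, so as a $K_N$-module the normal space is isomorphic to the isotropy representation $\mathbb R^p\otimes\mathbb R^q$ of the real Grassmannian, which is irreducible. By Schur's lemma the endomorphism $Q$ of Theorem \ref{T:2.1} is then a scalar $kI$, and the stability criterion reduces to: $N$ is stable iff $c(V)\ge c(\mathcal P)$ for every simple $SO(n)$-module $V$ occurring in $\Gamma(T^\perp N)$, and by the Bott--Frobenius theorem these are exactly the $V$ with $V|_{K_N}\supseteq\mathbb R^p\otimes\mathbb R^q$.

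The decisive step is then to exhibit one simple module with strictly smaller Casimir eigenvalue than $\mathcal P$. The natural candidate is the adjoint representation $V=\Lambda^2\mathbb R^n=\mathfrak{so}(n)$, of highest weight $\omega_2$. Its restriction splits as $\Lambda^2(\mathbb R^p\oplus\mathbb R^q)=\Lambda^2\mathbb R^p\oplus(\mathbb R^p\otimes\mathbb R^q)\oplus\Lambda^2\mathbb R^q$, which contains the normal module $\mathbb R^p\otimes\mathbb R^q$, so $V$ does occur in $\Gamma(T^\perp N)$. Using Freudenthal's formula $c(\lambda)=\langle\lambda,\lambda+2\rho\rangle$ with one fixed normalization applied uniformly to both weights, a short computation (carried out once in the $B_m$ case and once in the $D_m$ case) gives $c(2\omega_1)=2n$ while $c(\omega_2)=2n-4$. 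Hence $c(V)=2n-4<2n=c(\mathcal P)$; taking $P'=\mathcal P$ in Theorem \ref{T:2.1}, the Jacobi operator $L$ restricts to $(c(V)-c(\mathcal P))I=-4\,I$ on this copy of $V$, producing a negative eigenvalue and proving that $N$ is unstable.

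I expect the genuinely delicate points to be bookkeeping rather than deep obstacles: correctly identifying $\mathcal P\cong S^2_0(\mathbb R^n)$ as an irreducible $SO(n)$-module with highest weight $2\omega_1$, confirming that the normal module really is the irreducible $\mathbb R^p\otimes\mathbb R^q$ so that the scalar case of Theorem \ref{T:2.1} applies, and keeping the Casimir normalization consistent between the two weights (only the sign of $c(V)-c(\mathcal P)$ matters, and it is invariant under positive rescaling). Low-dimensional coincidences, such as $n=4$ where $\mathfrak{so}(4)$ fails to be simple, or degenerate values of $p$ or $q$, should be inspected separately, but the gap $c(\omega_2)-c(2\omega_1)=-4$ is robust. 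As an independent check one may instead verify condition (iii) of Proposition \ref{P:5.1}: both spaces are Einstein, and comparing their Einstein constants relative to the induced metric gives $i^{*}R^M=\tfrac12 g_N$ versus $2R^N=\tfrac{n-2}{2n}\,g_N$, so that $i^{*}R^M>2R^N$; since the $SO(n)$-action supplies nonzero Killing fields on $N$, Proposition \ref{P:5.1}(iii) again forces instability.
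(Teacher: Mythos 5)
Your proposal is correct and takes essentially the same route as the paper: both apply Theorem \ref{T:2.1} with $\mathcal P\cong S^2_0(\mathbb R^n)$ of highest weight $2\tilde\omega_1$ and take $V$ to be the adjoint module of highest weight $\tilde\omega_2$, whose Casimir eigenvalue is strictly smaller. The only cosmetic differences are that the paper exhibits $V$ inside $\Gamma(T^{\perp}N)$ geometrically (the fields $v=Ju$, with $u$ a Killing field of $N$, are normal because $N$ is totally real) where you instead verify the Bott--Frobenius condition by branching $\Lambda^2\mathbb R^n$ over $SO(p)\times SO(q)$, and that your explicit Freudenthal computation ($2n$ versus $2n-4$) and the Proposition \ref{P:5.1}(iii) cross-check supply details the paper asserts without calculation (the paper itself notes that condition (iii) ``may be verified'').
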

\begin{proof} Let $N= G_{N}/K_{N}$ be a  totally real and totally geodesic submanifold of a compact K\"ahlerian symmetric space $M=G_{M}/K_M$. Then $N$ will be unstable if we find $c(V)<c(P')$ as in Theorem \ref{T:2.1}. 

For each simple $\mathcal P'$ in ${\mathcal P}$, there is a simple ${\it g}_{_{N}}$-module $V$ in $\Gamma (E')$ whose
members are normal vector fields $v = Ju$ for some Killing vector field $u$ in ${\it g}_{_{N}}$. This is obvious from the definition of a totally real submanifold. In the case of $G^{\mathbb R}(p,q)$ in $G^{\mathbb C}(p,q)$, ${\mathcal P}$ is simple and $c(P)=c(2{\tilde
\omega}_{1})>c({\tilde \omega}_{2}) =c({\it g}_{_{N}}),$ where ${\tilde \omega}_{2}$ denotes the highest weight in Bourbaki's notation (see \cite{B}) and ${\tilde \omega}_{1}$ is the only representation that has a smaller eigenvalue than ${\tilde \omega}_{2}$.  \end{proof}

\begin{remark} {\rm For the case $p=1$, this proposition is a special case of a theoem of Lawson-Simons  \cite{LS}.}\end{remark}

\begin{remark} {\rm  We thank H. Naitoh for pointing our the misprint $SU(2)\times SU(2)$ in Table VIII of  \cite{CN2}  for $SU(3)$, which shall read $S(U(2)\times U(1))$.}\end{remark}

\section{Appendix}

\begin{remark} The first five sections above form an unpublished article written by B.-Y. Chen, P.-F. Leung and T. Nagano in 1980 under the same title. This unpublished article has been  cited or the results of this unpublished article were cited in the books and papers \cite{C1}--\cite{T2}.
\end{remark}

\begin{remark} Let $f:N\to M$ be a compact Lagrangian minimal submanifold of a K\"ahlerian manifold. It follows immediately from the proof of Theorem \ref{T:5.2} that the index $i(f)$ of $f$ satisfies 
\begin{align} i(f)\geq \beta_1(N),\end{align}
where $\beta_1(N)$ denotes the first Betti number  of  $N$.
\end{remark}

\begin{remark}
\vskip.1in
 A reformulation of our method was given by Y. Ohnita in \cite{O}, in which
Ohnita improved our algorithm to include the formulas for the index, the
nullity and the Killing nullity of a compact totally geodesic submanifold in a compact
symmetric space.

Y. Ohnita's formulas for the index $i(f)$, the nullity $n(f)$, and the Killing nullity $n_k(f)$ are given respectively by
\vskip.1in

(a) $i(f)=\sum_{i=1}^t\sum_{\lambda\in D(G), a_\lambda <a_i} m(\lambda)d_\lambda$,
\vskip.1in

(b) $n(f)=\sum_{i=1}^t\sum_{\lambda\in D(G), a_\lambda =a_i} m(\lambda)d_\lambda$,
\vskip.1in

(c) $n_k(f)=\sum_{i=1,\mathfrak m_i^\perp\ne \{0\}}^t\dim \mathfrak g_i^\perp$,
\vskip.1in

\noindent where  $m(\lambda)=\dim\, \hbox{Hom}_K(V_\lambda,(\mathfrak m_i^\perp)^{\mathbb C})$, $d_\lambda$ is the dimension of the representation $\lambda$, and $f:N\to M$ is the totally geodesic imbedding.

\end{remark}


\begin{thebibliography}{10}

\bibitem{B} N. Bourbaki, {Groupes et algebres de Lie,} Chapters 7 and 8, Hermann, 1975. 

\bibitem{CO} B.-Y. Chen and K. Ogiue, {On totally real submanifolds,} {Trans. Amer. Math. Soc.}, {\bf 193} (1974), 257--266.

\bibitem{CN1} B.-Y. Chen and T. Nagano, {Totally geodesic submanifolds of symmetric spaces}. I, {Duke Math. J.}  {\bf 44} (1977), 745--755.

\bibitem{CN2} B.-Y. Chen and T. Nagano, {Totally geodesic submanifolds of symmetric spaces}. II, {Duke Math. J.}  {\bf 45} (1978), 405--425.

\bibitem{Fo}  A. T. Fomenko, {Minimal compacta in Riemannian manifolds, and a conjecture of Reifenberg}, Math. USSR Izv. {\bf 6} (1972), 1037-1066. (Izv. Akad. Nauk SSSR Ser. Mat. {\bf 36} (1972), 1049--1079.)

\bibitem{F} H. Federer, {Some theorems in integrabke currents}, Trans. Amer. Math. Soc. {\bf 117} (1965), 43--67.

\bibitem{H} S. Helgason, {Differential geometry, Lie groups and symmetric spaces}, Academic Press, New York, 1978.

\bibitem{KN} S. Kobayashi and K. Nomizu, Foundations of differential geometry, Vol. II, 
 John Wiley \& Sons, Inc., New York-London-Sydney, 1969.

\bibitem{LS}  H. B. Lawson and J. Simons, On stable currents and their applications to global problems in real and complex geometry,  Ann. of Math., {\bf 98} (1973), 427-450.

\bibitem{N} T. Nagano, {Stability of harmonic maps between symmetric spaces}, Harmonic maps (New Orleans, La., 1980), Lecture Notes in Math. {\bf 949} (1982), 130--137, Spring-Verlag.

\bibitem{S} J. Simons, {Minimal varieties in Riemannian manifolds}, {Ann. of Math.,} {\bf 88} (1968), 62--105.

\bibitem{Sm}  R. T. Smith, {The second variation formula for harmonic mappings}, Proc. Amer. Math. Soc. {\bf 47} (1975), 229--236.

\bibitem{T} M. Takeuchi, {Stability of certain minimal submanifolds of compact Hermitian symmetric spaces}, Tohoku Math. J., {\bf 36} (1984), 293-314.

\bibitem{Y} K. Yano, {Integral formulas in Riemannian geometry}, M. Dekker, New York, NY, 1970.

\bibitem{C1}  B.-Y. Chen, Geometry of Submanifolds and Its Applications, Science University of Tokyo, Japan, 1981.

\bibitem{C2}  B.-Y. Chen, Geometry of Slant Submanifolds, Katholieke Universiteit te Leuven, Belgium, 1990.

\bibitem{C3}  B.-Y. Chen,  Riemannian submanifolds, Handbook of differential geometry, Vol. I, 187--418, North-Holland, Amsterdam, 2000.

\bibitem{KT}  T. Kimura and M. S. Tanaka, Stability of certain minimal submanifolds in compact symmetric spaces of rank two, Differential Geom. Appl. {\bf 27} (2009), no. 1, 23--33. 

\bibitem{Ma}  K. Mashimo, On the stability of Cartan embeddings of compact symmetric spaces, Arch. Math. (Basel) {\bf 58} (1992), no. 5, 500--508. 

\bibitem{MT}  K. Mashimo and H. Tasaki,  Stability of maximal tori in compact Lie groups,  Algebras Groups Geom. {\bf 7} (1990), no. 2, 114--126.

\bibitem{MT2}  K. Mashimo and H. Tasaki, Stability of closed Lie subgroups in compact Lie groups, Kodai Math. J. {\bf 13} (1990), no. 2, 181--203.

\bibitem{Oh} Y.-G. Oh,  Second variation and stabilities of minimal Lagrangian submanifolds in K\"ahler manifolds,  Invent. Math. {\bf 101} (1990), no. 2, 501--519.

\bibitem{O}  Y. Ohita, On stability of minimal submanifolds in compact symmetric spaces,  Compositio Math. {\bf 64} (1987), no. 2, 157--189.

\bibitem{T2}  M. Takeuchi, Stability of certain minimal submanifolds of compact Hermitian symmetric spaces, Tohoku Math. J.  {\bf 36} (1984), no. 2, 293--314.


\end{thebibliography}
\end{document}